                    \def\version{July 26, 2016}                   %
\def\@rmrk#1#2{\refstepcounter
    {#1}\@ifnextchar[{\@yrmrk{#1}{#2}}{\@xrmrk{#1}{#2}}}
\makeatletter\@addtoreset{equation}{section}\makeatother
 \newfont{\bfit}{cmbxti10 scaled 1200}
\renewcommand{\d}{{\rm d}}
 \newcommand{\e}{{\rm e} }
 \newcommand{\eps}{\varepsilon}
 \newcommand{\R}{\mathbb{R}}
 \newcommand{\N}{\mathbb{N}}
 \newcommand{\Z}{\mathbb{Z}}
 \newcommand{\E}{\mathbb{E}}
 \renewcommand{\P}{\mathbb{P}}
 \renewcommand{\thefootnote}{\arabic{footnote}}
 \def\1{{\mathchoice {1\mskip-4mu\mathrm l} 
{1\mskip-4mu\mathrm l}
{1\mskip-4.5mu\mathrm l} {1\mskip-5mu\mathrm l}}}
 \newcommand{\Mcal}{{\mathcal M}}
\newcommand{\heap}[2]{\genfrac{}{}{0pt}{}{#1}{#2}}
\newcommand{\sfrac}[2]{\mbox{$\frac{#1}{#2}$}}
\renewcommand{\subsection}{\secdef \subsct\sbsect}
\newcommand{\subsct}[2][default]{\refstepcounter{subsection}
\vspace{0.15cm}
{\flushleft\bf \arabic{section}.\arabic{subsection}~\bf #1  }
\nopagebreak\nopagebreak}
\newcommand{\sbsect}[1]{\vspace{0.1cm}\noindent
{\bf #1}\vspace{0.1cm}}
\newtheorem{theorem}{Theorem}[section]
\newtheorem{lemma}[theorem]{Lemma}
\newtheorem{cor}[theorem]{Corollary}
\newtheorem{prop}[theorem]{Proposition}
\newtheoremstyle{thm}{1.5ex}{1.5ex}{\itshape\rmfamily}{}
{\bfseries\rmfamily}{}{2ex}{}
\newtheoremstyle{rem}{1.3ex}{1.3ex}{\rmfamily}{}
{\itshape\rmfamily}{}{1.5ex}{}
\theoremstyle{rem}
\newtheorem{remark}{{\slshape\sffamily Remark}}[]
\def\thebibliography#1{\section*{References}
  \list%
  {\arabic{enumi}.}
    {\settowidth\labelwidth{[#1]}\leftmargin\labelwidth
    \advance\leftmargin\labelsep
    \parsep0pt\itemsep0pt
    \usecounter{enumi}}
    \def\newblock{\hskip .11em plus .33em minus .07em}
    \sloppy                   
    \sfcode`\.=1000\relax}
\begin{document}
\title[Mean-field interaction of Brownian occupation measures, I]
{\large Mean-field interaction of\\ Brownian occupation measures, I:\\ uniform tube property of the Coulomb functional}
\author[Wolfgang K\"onig and Chiranjib Mukherjee ]{}
\maketitle
\thispagestyle{empty}
\vspace{-0.5cm}

\centerline{Wolfgang K\"onig\footnote{WIAS Berlin, Mohrenstra{\ss}e 39, Berlin 10117, {\tt koenig@wias-berlin.de},  and TU Berlin} and Chiranjib Mukherjee\footnote{Courant Institute, New York, 251 Mercer Street, New York 10012, USA, {\tt mukherjee@cims.nyu.edu},  and WIAS Berlin}}
\renewcommand{\thefootnote}{}
\footnote{\textit{AMS Subject
Classification:} 60J65, 60J55, 60F10.}
\footnote{\textit{Keywords:} Gibbs measures, interacting Brownian motions, Coulomb functional, polaron problem}

\vspace{-0.5cm}
\centerline{\textit{WIAS Berlin and TU Berlin, Courant Institute New York and WIAS Berlin}}
\vspace{0.2cm}

\begin{center}
\version
\end{center}

\begin{quote}{\small {\bf Abstract: }
We study the transformed path measure arising from the self-interaction of a three-dimensional Brownian motion via an exponential tilt with the Coulomb energy of the occupation measures of the motion by time $t$. The logarithmic asymptotics of the partition function were identified in the 1980s by Donsker and Varadhan \cite{DV83-P} in terms of a variational formula. Recently \cite{MV14} a new technique for studying the path measure itself was introduced, which allows for proving that the normalized occupation measure asymptotically concentrates around the set of all maximizers of the formula. In the present paper, we show that likewise the Coulomb functional of the occupation measure concentrates around the set of corresponding Coulomb functionals of the maximizers in the uniform topology. This is a decisive step on the way to a rigorous proof of the convergence of the normalized occupation measures towards an explicit mixture of the maximizers, derived in \cite{BKM15}. Our methods rely on 
deriving H{\"o}lder-continuity of the Coulomb functional of the occupation measure with exponentially small deviation probabilities and invoking the large deviation theory developed in \cite{MV14} to a certain shift-invariant functional of the occupation measures.
}
\end{quote}


\section{Introduction and main results}\label{intro}

\noindent In this paper, we study a transformed path measure that arises from a mean-field type interaction of a three dimensional Brownian motion in a Coulomb potential. Under the influence of such a transformed measure, the large-$t$ behavior of the normalized occupation measures, denoted by $L_t$, is of high interest. This is intimately connected to the well-known polaron problem from statistical mechanics and a full understanding of the behavior of $L_t$ under the aforementioned transformation is crucial for the analysis of the polaron path measure under \lq strong coupling\rq\,, its effective mass and justification of mean-field approximations. For physical relevance of this model, we refer to \cite{S86}. Some mathematically rigorous research in this direction began in the 1980s with the analysis of the partition function of Donsker and Varadhan (\cite{DV83-P}), but it was not until recently that a new technique was developed \cite{MV14} for handling the actual path measures, and the main results of
the present paper, besides being interesting on their own, make determinant contribution towards a 
deeper analysis and a full identification of the limiting distribution of $L_t$ under the transformed path measure. 

We start with developing the mathematical layout of the model in Section~\ref{sec-model}, remind on earlier results in Section~\ref{sec-earlier}, present our new progress in Section~\ref{sec-results}
and report on the achievements of \cite{MV14} in Section~\ref{compactLDP}, which plays an important role in the present context. 

\subsection{The transformed path measure.} \label{sec-model}

\noindent We start with the Wiener measure $\P$ on $\Omega=C([0,\infty),\R^3)$ corresponding to a $3$-dimensional Brownian motion $W=(W_t)_{t\geq 0}$ starting from the origin. We are interested in the transformed path measure
\begin{equation}\label{Phat}
\widehat{\P}_t(\d \omega)=\frac 1 { Z_t}\, \exp\bigg\{\frac 1t\int_0^t\int_0^t \d \sigma\d s \,\frac 1{\big|\omega_\sigma-\omega_s\big|}\bigg\} \, \P(\d \omega) \qquad \omega\in \Omega,
\end{equation}
with the normalizing constant, the {\it partition function},
\begin{equation}\label{Zhat}
Z_t= \E\bigg[ \exp\bigg\{\frac 1t\int_0^t\int_0^t \d \sigma\d s \,\frac 1{\big|W_\sigma-W_s\big|}\bigg\}\bigg].
\end{equation}
We remark that the asymptotic behavior of $\widehat\P_t$ is determined by those influential paths which make $|W_\sigma- W_s|$ small, i.e., the interaction is {\it{self-attractive}}.  We also remark  that the factor $\frac 1t$ in the exponent in \eqref{Phat} makes the model interesting. Indeed, the double-integral in the exponent is of order $t^2$ for paths that stay in a compact region, and the entropic cost for this behaviour is $\e^{-O(t)}$; it is relatively easy to suspect that such a behaviour is typical under the transformed measure. Hence, it is the factor $\frac1t$ that makes the energy and the entropy terms run on the same scale and still gives the path enough freedom to fluctuate.

Let  
\begin{equation}\label{OcMeas}
L_t = \frac 1t \int_0^t  \d s \, \delta_{W_s}
\end{equation}
be the normalized occupation measure of $W$ until time $t$. This is a random element of $\Mcal_1(\R^3)$, the space of probability measures on $\R^3$. Then the path measure $\widehat{\P}_t$ can be written as 
$$
\widehat{\P}_t(A)=\frac 1{Z_t}\, \E\big[\1_A\, \exp\big\{t H(L_t)\big\}\big] \qquad A\subset \Omega,
$$
where  
\begin{equation}\label{Hdef}
H(\mu)= \int_{\R^3}\int_{\R^3}  \frac {\mu(\d x)\,\mu(\d y)}{|x-y|},\qquad \mu\in\Mcal_1(\R^3),
\end{equation}
denotes the {\it{Coulomb potential energy functional}} of $\mu$. Hence, $ \widehat \P_t$ is an exponential tilt of the Coulomb energy function of $L_t$ with parameter $t$. It is the goal of this paper to make a contribution to a rigorous  understanding of the behavior of $L_t$ under $\widehat {\P}_t$.

For any $\mu\in\Mcal_1(\R^3)$, we define the function
$$
\big(\Lambda\mu\big)(x)= \bigg(\mu\star \frac 1{|\cdot|}\bigg)(x)= \int_{\R^3} \frac{\mu(\d y)}{|x-y|},
$$
which is also sometimes called its {\it{Coulomb potential energy functional}}. In order to avoid misunderstandings, we will call $H(\mu)$ the {\em Coulomb energy} and $\Lambda(\mu)$ the {\em Coulomb functional} of $\mu$. Note that $H(\mu)= \big\langle \mu, \Lambda \mu\big\rangle= \int (\Lambda\mu)(x) \,\mu(\d x)$. We remark that the Coulomb functional of the Brownian occupation measure,  
\begin{equation}\label{Lambdadef}
\Lambda_t(x)=\big(\Lambda L_t\big)(x)= \int_{\R^3} \frac{L_t(\d y)}{|x-y|}= \frac 1 t \int_0^t \frac{\d s}{|W_s- x|},
\end{equation}
is almost surely finite in $\R^3$.

\subsection{Existing results.}\label{sec-earlier}

\noindent Donsker and Varadhan \cite{DV83-P} studied the asymptotic behaviour of  $Z_{t}$ resulting in the variational formula
 \begin{equation}\label{rhodef}
 \begin{aligned}
\lim_{t\to\infty}\frac 1t\log Z_t &= \sup_{\mu\in \Mcal_1(\R^3)} \bigg\{ H(\mu)- I(\mu)\bigg\}\\
&=\sup_{\heap{\psi\in H^1(\R^3)}{\|\psi\|_2=1}} 
\Bigg\{\int_{\R^3}\int_{\R^3}\d x\d y\,\frac {\psi^2(x) \psi^2(y)}{|x-y|} -\frac 12\big\|\nabla \psi\big\|_2^2\Bigg\} 
=\rho,
\end{aligned}
\end{equation}
with $H^1(\R^3)$ denoting the usual Sobolev space of square integrable
functions with square integrable gradient.  Furthermore, we put
\begin{equation}\label{Idef}
I(\mu)=\frac 12\|\nabla\psi\|_2^2
\end{equation}
if $\mu$ has a density $\psi^2$ with $\psi\in H^1(\R^3)$, and $I(\mu)=\infty$ otherwise. Note that both $H$ and $I$ are shift-invariant functionals, i.e., $H(\mu)=H(\mu\star\delta_x)$  and $I(\mu)=I(\mu\star\delta_x)$ for any $x\in\R^3$. 

The above result is a consequence of a {\it{large deviation principle}} (LDP) for $L_t$ under $\P$ in $\Mcal_1(\R^3)$, developed by Donsker and Varadhan (\cite{DV75}). This means, when $\Mcal_1(\R^3)$
is equipped with the usual weak topology, for every open set $G\subset\Mcal_1(\R^3)$, 
\begin{equation}\label{dvlb}
\liminf_{t\to\infty}\frac 1t \log \P\big(L_t\in G\big)\geq - \inf_{\mu\in G} I(\mu),
\end{equation}
and for any compact set $K\subset \Mcal_1(\R^3)$, 
\begin{equation}\label{dvub}
\limsup_{t\to\infty}\frac 1t \log \P\big(L_t\in K\big)\leq - \inf_{\mu\in K} I(\mu).
\end{equation}
The above statement is also called a {\it{weak large deviation principle}} since the upper bound \eqref{dvub} holds only for compact subsets. We say that a family of probability distributions satisfies a {\it{strong large deviation principle}} if, along with the lower bound \eqref{dvlb}, the upper bound \eqref{dvub} holds also for all closed sets. 
\medskip\noindent

The variational formula \eqref{rhodef} has been analyzed by Lieb (\cite{L76}). It turns out that there is a smooth, rotationally symmetric and centered maximizer $\psi_0$ which is  unique except for spatial translations. In other words, if $\mathfrak m$ denotes the set of maximizing densities, then
\begin{equation}\label{shiftunique}
\mathfrak m=  \big\{\mu_0 \star \delta_x\colon x\in \R^3\big\},
\end{equation}
where $\mu_0$ is a probability measure with a density $\psi_0^2$ so that $\psi_0$ maximizes the variational problem \eqref{rhodef}. We will often write $\mu_x=\mu_0\star \delta_x$ and write $\psi_x^2$ for its density.
\medskip\noindent

 

Given \eqref{rhodef} and \eqref{shiftunique}, we expect the distribution of $L_t$ under the transformed measure $\widehat \P_t$ to concentrate around $\mathfrak m$ and, even more, to converge towards 
a mixture of spatial shifts of $\mu_0$. Such a precise analysis was carried out by Bolthausen and Schmock \cite{BS97} for a spatially discrete version of $\widehat\P_t$, i.e., for the continuous-time simple random walk on $\Z^d$ instead of Brownian motion and an interaction potential $v\colon \Z^d \rightarrow [0,\infty)$ with finite support instead of the singular Coulomb potential $x\mapsto 1/|x|$. A first key step in \cite{BS97} was to show that, under the transformed measure, the probability of the local times falling outside any neighborhood of the maximizers decays exponentially. For its proof, the lack of a strong LDP for the local times was handled by an extended version of a standard periodization procedure by folding the random walk into some large torus. Combined with this, an explicit tightness property of the distributions of the local times led to an identification of the limiting distribution.

However, in the context of the continuous setting with a singular Coulomb interaction, the aforementioned periodization technique or any standard compactification procedure does not work well to circumvent the lack of a strong LDP. An investigation of $\widehat\P_t \circ L_t^{-1}$, the distribution of  $L_t$ under $\widehat \P_t$, remained open until a recent result \cite{MV14} rigorously justified the above heuristics, leading to the statement 
\begin{equation}\label{tubeweak}
\limsup_{t\to\infty} \frac 1t \log \widehat\P_t \big\{L_t \notin U(\mathfrak m)\big\}<0,
\end{equation}
where $U(\mathfrak m)$ is any neighborhood of $\mathfrak m$ in the weak topology induced by the Prohorov metric, the metric that is induced by all the integrals against continuous bounded test functions. Hence, \eqref{tubeweak} implies that the distribution of $L_t$ under $\widehat \P_t$ is asymptotically concentrated around $\mathfrak m$. Since a one-dimensional picture of $\mathfrak m$ is an infinite line, its neighborhood resembles an infinite tube. Therefore, assertions similar to \eqref{tubeweak} are sometimes called a {\em tube property}. 

It is worth pointing out that, although \eqref{tubeweak} requires only the weak topology in the statement, its proof is crucially based on a robust theory of a {\it{compactification}} $\widetilde {\mathcal X}$ of the quotient space
$$
\widetilde\Mcal_1(\R^d) \hookrightarrow \widetilde {\mathcal X}
$$ 
of orbits $\widetilde \mu=\{\mu \star\delta_x\colon x\in\R^d\}$ of probability measures $\mu$ on $\R^d$ (for any $d\geq 1$) under translations and a full LDP for the distributions of $\widetilde L_t\in \widetilde\Mcal_1(\R^d)$ embedded in the compactification. In particular, this is based on a topology induced by a different metric in the compactification $\widetilde {\mathcal X}$, see Section \ref{compactLDP} for details and its consequences in the present context.

\subsection{Our results: uniform tube property and regularity of $\boldsymbol{\Lambda(L_t)}$}\label{sec-results}

\noindent  Let us turn to our main results. We write 
$$
\Lambda(\psi^2)(x)=\int \d y\,\frac{\psi^2(y)}{|x-y|}
$$ 
for functions $\psi^2$, and recall that $\psi_w^2= \psi_0^2 \star \delta_w$ denotes the shift of the maximizer $\psi_0^2$ of the second variational formula \eqref{rhodef} by $w\in\R^3$. 
Roughly speaking, we will establish that on the large deviations scale, the Coulomb functionals $\Lambda(L_t)$ 
under the transformed path measures $\widehat\P_t$ stay close to the manifold of Coulomb functionals $\Lambda\mathfrak m=\{\Lambda\psi_w^2\colon w\in \R^3\}$ 
acting on the translations of the Pekar maximizers, see Theorem \ref{tubeunifmetric}. This is a first determinant step towards establishing the full conjecture on the convergence 
of the distributions $\widehat\P_t\circ L_t^{-1}$ towards an explicit spatial mixture of the maximizers $\mathfrak m$, see Remark \ref{remark2}.
On the way towards proving Theorem \ref{tubeunifmetric}, we also derive some modulus of continuity of $\Lambda(L_t)$, which can be of independent interest
in the realm of regularity properties of local times for stochastic processes.


 Here is the statement of our first main result.

\begin{theorem}\label{tubeunifmetric}
For any $\eps>0$, 
\begin{equation}\label{eqtubeunifmetric}
\limsup_{t\to\infty}\frac 1t\log \widehat\P_t\bigg\{\inf_{w\in \R^3} \big\|\Lambda_t- \Lambda \psi_w^2\big\|_\infty > \eps\bigg\} <0.
\end{equation}
\end{theorem}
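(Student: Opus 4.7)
The plan is to combine the weak tube property \eqref{tubeweak} with a quantitative H\"older regularity of $\Lambda_t$ and then upgrade weak convergence to uniform convergence via an Arzel\`a-Ascoli argument. Since $\Lambda$ is shift-equivariant, $\Lambda(\mu\star\delta_w)(x)=\Lambda\mu(x-w)$, the event in \eqref{eqtubeunifmetric} is invariant under spatial translations of $W$ and descends to a shift-invariant functional on the quotient $\widetilde\Mcal_1(\R^3)$, which is the natural object handled by the compactification framework of \cite{MV14}.

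\textbf{Step 1: H\"older regularity under $\P$.} The core technical step is to show that for some $\alpha\in(0,1)$, some constant $K<\infty$, and any prescribed rate $c>0$,
\[
\limsup_{t\to\infty}\frac 1t\log\P\bigg(\sup_{x\ne y}\frac{|\Lambda_t(x)-\Lambda_t(y)|}{|x-y|^\alpha}>K\bigg)\le -c.
\]
The pointwise bound $\big|\,|W_s-x|^{-1}-|W_s-y|^{-1}\big|\le |x-y|/(|W_s-x|\,|W_s-y|)$ interpolated with the trivial bound $|W_s-x|^{-1}+|W_s-y|^{-1}$ yields an estimate of the form $|x-y|^\alpha(|W_s-x|^{-p}+|W_s-y|^{-p})$ with some $p\in(1,2)$. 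Exponential moments of $\frac 1t\int_0^t|W_s-x|^{-p}\,\d s$ can be controlled uniformly in $x$ via Khas'minskii-type estimates (feasible in dimension three precisely because $p<2$), and a Garsia-Rodemich-Rumsey or chaining argument over a dense countable net of pairs $(x,y)$ upgrades pointwise control into a uniform H\"older modulus.

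\textbf{Step 2: Combination with \eqref{tubeweak} and transfer to $\widehat\P_t$.} On the intersection of the event that $\widetilde L_t$ lies in a small weak neighborhood of $\widetilde\mu_0$ and the H\"older event of Step~1, the family $\{\Lambda_t\}$ is equicontinuous and decays at infinity, so Arzel\`a-Ascoli forces $\|\Lambda_t-\Lambda\psi_w^2\|_\infty\to 0$ for the realizing shift $w$. Hence the failure event in \eqref{eqtubeunifmetric} is contained in the union of the weak-tube complement and the H\"older-failure set. To push the resulting $\P$-exponential bounds through to $\widehat\P_t$, apply H\"older's inequality,
\[
\widehat\P_t(A)\le Z_t^{-1}\,\P(A)^{1/p}\,\E\big[\exp\{qtH(L_t)\}\big]^{1/q},
\]
together with $Z_t=\exp\{t\rho+o(t)\}$ from \eqref{rhodef} and a Donsker-Varadhan-type identification $\E[\exp\{qtH(L_t)\}]=\exp\{t\rho_q+o(t)\}$ with $\rho_q\downarrow\rho$ as $q\downarrow 1$. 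Choosing $q$ sufficiently close to $1$ and the rate $c$ in Step~1 correspondingly large, exponentially small $\P$-probabilities become exponentially small $\widehat\P_t$-probabilities.

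\textbf{Main obstacle.} The principal difficulty is Step~1: the kernel $1/|x|$ is singular, so one must control an unbounded integrand uniformly over all reference points $x\in\R^3$ with only an exponentially rare failure. A secondary subtlety is that the H\"older modulus is a global-in-space quantity and the occupation measure $L_t$ can a priori spread widely in space; this is handled by the mass-recentering afforded by the compactification of \cite{MV14}, ensuring that the effective support of $L_t$ stays of order one and $\Lambda_t$ vanishes at infinity with only exponentially small failure probability.
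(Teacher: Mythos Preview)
Your Step~1 is essentially the paper's Theorem~\ref{superexp}, and the route you sketch (Khas'minskii-type exponential moments for $\frac1t\int_0^t|W_s-x|^{-p}\,\d s$ combined with a Garsia--Rodemich--Rumsey chaining) is precisely what the paper carries out in Lemmas~\ref{phimoments}--\ref{expmomLambdafullspace}. The transfer from $\P$ to $\widehat\P_t$ via H\"older's inequality is likewise how the paper argues (Corollary~\ref{P_tsuperexp}). One minor quantifier slip: with fixed $K$ the rate in Step~1 is not arbitrarily negative; rather, for every $c$ there is a $K=K(c)$ that works, which is all you need.

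The genuine gap is Step~2. Arzel\`a--Ascoli is a sequential compactness statement; what you actually need is, for each fixed $t$, a deterministic inclusion of the form $\{\widetilde L_t\in U\}\cap\{[\Lambda_t]_\alpha\le K\}\subset\{\inf_w\|\Lambda_t-\Lambda\psi_w^2\|_\infty\le\eps\}$. Weak closeness of $L_t$ to $\mu_w$ controls $\int f\,\d(L_t-\mu_w)$ only for \emph{bounded} test functions, whereas $\Lambda_t(z)$ integrates the unbounded kernel $y\mapsto|y-z|^{-1}$; equicontinuity of $\Lambda_t$ does not by itself bridge this, and the asserted ``decay at infinity'' is not justified on your good event either. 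The paper does not argue via Arzel\`a--Ascoli. After discretising the supremum to the lattice $\delta\Z^3$ (using the regularity of $\Lambda_t$ from Theorem~\ref{superexp} and the uniform continuity of $\Lambda\psi_w^2$), it splits $\Lambda_t(z)-\Lambda\psi_w^2(z)$ into a near-singularity piece $\int_{B_\eta(z)}\frac{L_t(\d y)}{|y-z|}$, which is handled \emph{separately} by a Khas'minskii/Portenko estimate uniformly over the lattice (see \eqref{unifclaim3}), and a truncated piece with the bounded equicontinuous kernel $\frac1{|y-z|}\wedge\frac1\eta$, for which the shift-invariant LDP of \cite{MV14} yields the closed-set upper bound \eqref{unifclaim4}. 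It is this explicit excision of the singularity, not a compactness argument, that upgrades weak to uniform. (Your approach could be repaired: the H\"older bound on $\Lambda_t$ together with the mean-value property of the Newtonian potential does force $\sup_z\int_{B_r(z)}\frac{L_t(\d y)}{|y-z|}\le CKr^\alpha$, which is exactly the missing singular-part control; but you do not identify this step, and the paper's direct route is cleaner.)
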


This is a tube property for $\Lambda_t$ in the uniform metric, since the $\eps$-neighbourhood of $\Lambda(\mathfrak m)=\{\Lambda(\psi^2_w)\colon w\in\R^3\}$ can be visualized as a tube around the \lq line\rq\ $\mathfrak m$. The proof of Theorem~\ref{tubeunifmetric} is given in Section \ref{sectiontubeunifmetric}. 

As a consequence of Theorem \ref{tubeunifmetric}, the Hamiltonian $H(L_t)=\langle L_t,\Lambda L_t\rangle$ converges in distribution towards the common Coulomb energy of any member of $\mathfrak m$ and we state this fact as

\begin{cor}\label{convergeHLt}
Under $\widehat\P_t$, the distributions of $H(L_t)$ converge weakly to the Dirac measure at
$$
H(\psi_0^2)= \int\int_{\R^3\times\R^3} \frac{\psi_0^2(x)\psi_0^2(y)}{|x-y|}\, \d x\d y.
$$
\end{cor}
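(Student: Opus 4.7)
The plan is to derive a deterministic inequality that expresses $|H(L_t)-H(\psi_w^2)|$ in terms of $\|\Lambda_t-\Lambda\psi_w^2\|_\infty$, and then to feed this into the uniform tube property of Theorem~\ref{tubeunifmetric}. Combined with the shift-invariance $H(\psi_w^2)=H(\psi_0^2)$ (which follows by a change of variables in the double integral), this will yield convergence in probability of $H(L_t)$ to the constant $H(\psi_0^2)$ under $\widehat\P_t$, and hence the claimed weak convergence to the Dirac measure.

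The key algebraic step is to exploit the bilinearity and symmetry of the Coulomb kernel. For any $w\in\R^3$, writing the difference $L_t(\d x)L_t(\d y)-\psi_w^2(x)\psi_w^2(y)\,\d x\d y$ as
\[
L_t(\d x)\bigl[L_t(\d y)-\psi_w^2(y)\d y\bigr]+\psi_w^2(y)\d y\bigl[L_t(\d x)-\psi_w^2(x)\d x\bigr]
\]
and integrating against $|x-y|^{-1}$ gives the identity
\[
H(L_t)-H(\psi_w^2)=\int_{\R^3}\bigl(\Lambda_t-\Lambda\psi_w^2\bigr)(x)\,L_t(\d x)+\int_{\R^3}\bigl(\Lambda_t-\Lambda\psi_w^2\bigr)(y)\,\psi_w^2(y)\,\d y.
\]
Since both $L_t$ and $\psi_w^2\,\d y$ are probability measures, each term is bounded by $\|\Lambda_t-\Lambda\psi_w^2\|_\infty$, and therefore
\[
\bigl|H(L_t)-H(\psi_0^2)\bigr|\;\leq\;2\,\inf_{w\in\R^3}\bigl\|\Lambda_t-\Lambda\psi_w^2\bigr\|_\infty.
\]

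Given this bound, for any $\eps>0$,
\[
\widehat\P_t\bigl\{\bigl|H(L_t)-H(\psi_0^2)\bigr|>2\eps\bigr\}\;\leq\;\widehat\P_t\Bigl\{\inf_{w\in\R^3}\bigl\|\Lambda_t-\Lambda\psi_w^2\bigr\|_\infty>\eps\Bigr\},
\]
and the right-hand side decays exponentially in $t$ by Theorem~\ref{tubeunifmetric}. This shows that $H(L_t)\to H(\psi_0^2)$ in $\widehat\P_t$-probability, which is equivalent to weak convergence of the laws of $H(L_t)$ to the Dirac mass at $H(\psi_0^2)$.

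There is essentially no obstacle here beyond checking the two routine facts used above: the shift-invariance $H(\psi_w^2)=H(\psi_0^2)$, and the elementary bound on $|H(L_t)-H(\psi_w^2)|$ by the uniform distance of the Coulomb functionals. The real work, namely the uniform tube property, has been done in Theorem~\ref{tubeunifmetric}.
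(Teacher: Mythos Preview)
Your argument is correct and is precisely the intended one: the paper does not spell out a proof of this corollary but merely records $H(L_t)=\langle L_t,\Lambda L_t\rangle$ and says the statement is ``a consequence of Theorem~\ref{tubeunifmetric}'', which is exactly the bilinear decomposition and sup-norm bound you wrote down. There is nothing to add.
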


Let us highlight the core of the proof of Theorem \ref{tubeunifmetric}. An important technical hindrance in the proof of Theorem~\ref{tubeunifmetric} stems from the singularity of the Coulomb potential $x\mapsto1/|x|$, which does not fit within the set up
of standard large deviation theory. This problem was encountered also in \cite{MV14} for deriving \eqref{tubeweak}. As it concerns $L_t$, this turned out to be a mild technical issue. Indeed,  a simple truncation argument with replacing $1/|x|$ by its regularized version $1/\sqrt{|x|^2+\delta^2}$ sufficed to carry over the
theory developed in \cite{MV14} to this singular potential. However, as we need now to work with $\Lambda(L_t)$ in the {\em uniform} metric, the singularity of $1/|\cdot|$ turns out be a more serious problem, since a standard contraction principle combined with the truncation argument does not work well here. Instead, we need a strategy that shows a strong regularity property of the random map $x\mapsto \Lambda_t(x)$, more precisely, an exponential decay of the probability that its modulus of continuity deviates from zero.  This is our second main result.

\begin{theorem}\label{superexp}
For every $b>0$, 
\begin{equation}\label{superexpeq}
\lim_{\delta\to 0}\limsup_{t\to\infty}\frac 1t \log\, \P\bigg\{\sup_{x_1,x_2\in\R^3\colon |x_1-x_2|\leq \delta} \, \big| \Lambda_t(x_1)-\Lambda_t(x_2)\big| \geq b\bigg\}=-\infty.
\end{equation}
\end{theorem}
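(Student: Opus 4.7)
The plan is to reduce the super-exponential modulus-of-continuity estimate for $\Lambda_t$ to a uniform quadratic density bound on the Brownian occupation measure $L_t$, via a smooth/singular decomposition of $1/|\cdot|$ at a mesoscopic scale $\eta=\eta(\delta,b)\to 0$, and then to establish that density bound by a Khasminskii-type exponential moment estimate chained dyadically across spatial scales.

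First I would localise $x$ to a ball $B(0,R)$ with $R=Mt$: the Gaussian tail $\P(\max_{s\leq t}|W_s|>R)\leq C\,e^{-R^2/(2t)}$ is super-exponentially small, and on its complement $\Lambda_t(x)\leq 2/R$ for $|x|\geq R$, so pairs $(x_1,x_2)$ with at least one point outside $B(0,R)$ contribute at most $4/R<b/2$ once $t$ is large, regardless of $\delta$. Next, pick a smooth radial cut-off $\chi:[0,\infty)\to[0,1]$ with $\chi=1$ on $[0,1]$ and $\chi=0$ on $[2,\infty)$, and split $1/|y|=f_\eta(y)+g_\eta(y)$ with $f_\eta(y)=\chi(|y|/\eta)/|y|$ supported in $\{|y|\leq 2\eta\}$ and $g_\eta(y)=(1-\chi(|y|/\eta))/|y|$ smooth with $\|\nabla g_\eta\|_\infty\leq C\eta^{-2}$. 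This yields
\[
|\Lambda_t(x_1)-\Lambda_t(x_2)|\leq \frac{C|x_1-x_2|}{\eta^2}+2\sup_{x\in B(0,R)}F_t^{2\eta}(x),\qquad F_t^\eta(x):=\frac{1}{t}\int_0^t\frac{\1_{|W_s-x|<\eta}}{|W_s-x|}\,ds,
\]
so the choice $\eta^2=2C\delta/b$ reduces the claim to a super-exponential bound on the event $\{\sup_{x\in B(0,R)} F_t^{2\eta}(x)>b/4\}$ whose rate tends to infinity as $\eta\to 0$.

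A Fubini rewriting delivers
\[
F_t^\eta(x)=\frac{L_t(B(x,\eta))}{\eta}+\int_0^\eta \frac{L_t(B(x,r))}{r^2}\,dr,
\]
and it suffices to bound $L_t(B(x,r))$ uniformly in $x\in B(0,R)$ and $r\leq 2\eta$ by something of order $r^2$. For a single pair $(x_0,r)$, Khasminskii's lemma applied to $V=\1_{B(x_0,r)}$, combined with the three-dimensional identity $\sup_y \E_y[\int_0^\infty V(W_s)\,ds]=r^2$ (coming from the Green function $\frac{1}{2\pi|\cdot|}$ of $-\tfrac12\Delta$ on $\R^3$), yields $\E[\exp(\lambda\int_0^t V(W_s)\,ds)]\leq(1-\lambda r^2)^{-1}$ for $\lambda r^2<1$; Chebyshev with $\lambda=1/(2r^2)$ then gives $\P(L_t(B(x_0,r))>Ar^2)\leq 2\,e^{-At/2}$. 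I would chain this dyadically at scales $r_k=2^{-k}\leq 2\eta$ using for each $k$ an $r_k/2$-net $\mathcal{N}_k\subset B(0,R)$ of cardinality $\leq C(R/r_k)^3$; the extension from net to the continuum costs only a constant factor in the ball radius. With the choice $A_k=A_0 k$ for $A_0=cb/(k_{\min}\eta)$, $k_{\min}=\lceil\log_2(1/(2\eta))\rceil$ and $c$ a small universal constant, the dyadic summation controls $F_t^{2\eta}(x)$ by $b/4$ on the intersection of the events $\{L_t(B(x,r_k))\leq C'A_k r_k^2\}$, while the union bound across $k\geq k_{\min}$ and $\mathcal{N}_k$ decays like $\exp(-A_0 k_{\min} t/2+O(\log t))$, giving the required super-exponential rate $\geq c'b/\eta\to\infty$ as $\eta\to 0$.

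The crux of the argument is this dyadic chaining: the $r$-net in $B(0,R)$ has cardinality blowing up like $r^{-3}$ while the one-ball Khasminskii-Chebyshev rate is $r$-independent, so a single scale does not win on its own. The remedy is the $k$-dependent threshold $A_k$, tuned to grow just fast enough in $k$ to dominate the net count at each level while still keeping the overall bound on $F_t^{2\eta}$ small, and this fragile balance between the Newtonian singularity and the effectively two-dimensional nature of the three-dimensional Brownian range is where the real work lies.
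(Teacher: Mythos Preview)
Your approach is genuinely different from the paper's and, modulo two easily repaired imprecisions in the localisation step, it works. The paper first uses Brownian scaling to reduce to $t=1$, then builds stretched-exponential moment bounds of the form $\sup_x\E_x\big[\exp\big(\beta\,(|\Lambda_1(x_1)-\Lambda_1(x_2)|/|x_1-x_2|^{1-2\eps})^{\rho}\big)\big]<\infty$ through a delicate analysis of a truncated difference $V_{x_1,x_2}$, integrates these over $(x_1,x_2)$ to manufacture a single random variable $M$ with finite mean, and then feeds $M$ into the Garsia--Rodemich--Rumsey lemma to obtain a uniform modulus of continuity for $\Lambda_1$. You bypass both the scaling and GRR entirely: the smooth/singular split at scale $\eta\sim\delta^{1/2}$ handles the regular part by a global Lipschitz bound, and your layer-cake rewriting of $F_t^\eta$ together with the scale-free Khasminskii--Chebyshev estimate $\P(L_t(B(x_0,r))>Ar^2)\leq 2e^{-At/2}$ and a multi-scale union bound over spatial nets is an elegant replacement for the paper's moment machinery. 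The paper's route delivers genuine H\"older continuity of $\Lambda_t$ with an explicit gauge as a byproduct (which the authors note but do not exploit further); yours is more elementary and conceptually cleaner, but gives only the super-exponential estimate as stated.

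Two small corrections to your first paragraph. The claim that on $\{\max_{s\leq t}|W_s|\leq R\}$ one has $\Lambda_t(x)\leq 2/R$ for all $|x|\geq R$ is false: if $|x|=R$ and the path lingers near $x$ inside the ball, $\Lambda_t(x)$ can be arbitrarily large. And for fixed $M$ the Gaussian tail $e^{-M^2t/2}$ has exponential rate $M^2/2$, not a super-exponential one. Neither matters for your argument: once the kernel is split, $F_t^{2\eta}(x)$ vanishes automatically for $x$ outside the $2\eta$-neighbourhood of the Brownian range, so on $\{\max_{s\leq t}|W_s|\leq R\}$ the supremum already runs only over $B(0,R+2\eta)$ and your chaining proceeds unchanged; and you may simply let $M=M(\eta)\to\infty$ as $\eta\to0$ so that the Gaussian-tail rate diverges alongside your main rate $c'b/\eta$.
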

In Section \ref{sectionsuperexp}, we prove Theorem~\ref{superexp}.
Let us state the following useful corollary to Theorem \ref{superexp}, which is also of independent interest; its proof is also deferred to Section~\ref{sectiontubeunifmetric}.

\begin{cor}\label{expdecaysupnorm}
 For any $b>0$,
 $$
 \limsup_{t\to\infty} \frac 1t \log \P\big\{\|\Lambda_t\|_\infty>b\big\}<0.
 $$
 \end{cor}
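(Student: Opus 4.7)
The plan is to combine three ingredients: a uniform-in-$x$ pointwise exponential tail bound for $\Lambda_t(x)$, a Brownian tail estimate that confines the supremum to a ball of radius $O(t)$, and Theorem~\ref{superexp}, which allows passage from the maximum over a finite $\delta$-net back to the full supremum via a union bound.

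For the pointwise estimate, the goal is to show that for every $b>0$ there exists $\kappa(b)>0$ with
$$
\sup_{x\in\R^3}\,\P\bigl\{\Lambda_t(x)>b\bigr\}\leq C\,e^{-\kappa(b)\,t},\qquad t\geq 1.
$$
By Chebyshev's inequality this reduces to a uniform moment-generating-function bound $\sup_{x}\E[\exp(\lambda\int_0^t |W_s-x|^{-1}\,\d s)]\leq C\,e^{c(\lambda)\,t}$ with $c(\lambda)\to 0$ as $\lambda\to 0$. The latter follows from the Markov property of Brownian motion combined with Khas'minskii's lemma, applied to the Kato-class estimate $\sup_{x\in\R^3}\E[\int_0^T |W_s-x|^{-1}\,\d s]=O(\sqrt{T})$ obtained by a direct Gaussian computation. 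Separately, the sub-Gaussian Brownian tail $\P\{\max_{s\leq t}|W_s|>Rt\}\leq C\,e^{-R^2 t/c_0}$ shows that off an exponentially small event the path is confined to $B(0,Rt)$; on this event and for $|x|>2Rt$ one has $|W_s-x|\geq Rt$, so $\Lambda_t(x)\leq 1/(Rt)<b/4$ for $t$ large. The supremum is therefore effectively taken over $B(0,2Rt)$.

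By Theorem~\ref{superexp}, one can then pick $\delta>0$ small enough that
$$
\P\Bigl\{\sup_{\substack{x_1,x_2\in\R^3\\ |x_1-x_2|\leq\delta}} |\Lambda_t(x_1)-\Lambda_t(x_2)|>b/4\Bigr\}\leq e^{-Mt}
$$
for some large $M>0$ and all $t$ large, and cover $B(0,2Rt)$ with a $\delta$-net $\{x_1,\ldots,x_{N_t}\}$ of cardinality $N_t=O(t^3)$. On the intersection of the three good events one has $\|\Lambda_t\|_\infty\leq \max_{i\leq N_t}\Lambda_t(x_i)+b/4$, and a union bound using the pointwise estimate gives $\P\{\max_i \Lambda_t(x_i)>b/2\}\leq C\,t^3\,e^{-\kappa(b/2)t}$, still exponentially small. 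Summing the three error probabilities produces an exponential bound on $\P\{\|\Lambda_t\|_\infty>b\}$, as claimed. The main technical obstacle is the uniform-in-$x$ Khas'minskii bound with decay rate $c(\lambda)\to 0$ fast enough that the polynomial factor $t^3$ from the covering is absorbed; once this and Theorem~\ref{superexp} are in place, the rest is a standard net-and-union-bound reduction.
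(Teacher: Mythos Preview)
Your proposal is correct and follows essentially the same route as the paper: decompose into an oscillation term controlled by Theorem~\ref{superexp}, a far-field term controlled by a Brownian tail estimate, and a near-field supremum over a polynomial-size $\delta$-net handled by a union bound together with the pointwise exponential tail coming from Khas'minskii's/Portenko's lemma. The only cosmetic differences are that the paper confines to a box of radius $t^2$ (so $O(t^6)$ net points) rather than $O(t)$, and phrases the Kato-class moment bound via Portenko's lemma; neither affects the argument.
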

 
Concerning the regularity of $\Lambda_t$ we have a more quantitative result than Theorem~\ref{superexp}, which we state here because of its own interest. Indeed, one main step in the proof of Theorem~\ref{superexp} is the following (stretched) exponential integrability.

\begin{prop}\label{expmomLambda}
There are constants $\rho>1$, $a\in(0,1)$ and $\beta\in(0,\infty)$ such that
$$
\sup_{\heap{x_1,x_2\in\R^3}{|x_1-x_2|\leq 1}} \sup_{x\in \R^3}\E_x \bigg[\exp\bigg\{\beta \bigg(\frac{\big|\Lambda_1(x_1)-\Lambda_1(x_2)\big|}{|x_1-x_2|^{a}}\bigg)^\rho\bigg\}\bigg] <\infty.
$$
\end{prop}

This assertion suffices for our purposes, but it is clear that our proof can be extended to prove a number of more refined statements about the regularity of $\Lambda_t$, like the identification of the exact index of its H\"older continuity, almost sure limsup and liminf assertions about its modulus of continuity, and local and global laws of iterated logarithms. Let us remark that 
this might run prallel to the work of Donsker and Varadhan \cite{DV77} on the law of iterated logarithm for one-dimensional Brownian local times.

\begin{remark}\label{remark2}
Let us remark that Theorem~\ref{tubeunifmetric}, in combination with Corollaries \ref{convergeHLt} and  \ref{expdecaysupnorm}, besides their intrinsic interests on their own right,
have proved to be instrumental in proving tightness of the distributions of $L_t$ under $\widehat\P_t$ and their convergence 
towards an explicit (spatially inhomogeneous) mixture of the maximizers $\{\psi_x^2\colon  x\in \R^3\}$, which 
resolves the aforementioned ``mean-field approximation" of the polaron problem on the level of path measures.
This has been carried out in \cite{BKM15}, and in this context, we refer to Section 2.4 in \cite{BKM15} for a heuristic discussion on the relevance of the results derived in the present paper.
\end{remark}

\begin{remark}\label{remark3}
Let us also remark that our choice $d=3$ and the Coulomb potential $V(x)=1/|x|$ in $\R^3$ is motivated by the {\it{polaron problem}} in quantum statistical mechanics, 
which is described by the path measures 
$$
\widehat\P_{\lambda,t}(\d \omega)= \frac 1 {Z_{\lambda,t}} \exp\bigg\{\lambda \int_0^t\int_0^t \d\sigma \d s \frac{\e^{-\lambda|\sigma-s|}}{|\omega_\sigma- \omega_s|}\bigg\} \,\, \P(\d \omega),
$$
where $\lambda>0$ is a parameter, $\P$ is three-dimensional Wiener measure and $Z_{\lambda,t}$ is the normalization constant or the partition function. The quest for a rigorous analysis
of the polaron measure $\widehat\P_{\lambda,t}$ for $t\to\infty$, followed by $\lambda\to 0$, motivated and inspired the study of the mean-field path measures $\widehat\P_t$. These measures now have been fully analyzed
in the series of papers \cite{MV14}, \cite{M15}, \cite{BKM15} and the present article. See Section 1.1 in \cite{BKM15} for a discussion regarding the approximation of $\widehat\P_{\lambda,t}$ by $\widehat\P_t$ for $\lambda\sim 0$.
Let us emphasize that, in each of these three papers, an instrumental r\^{o}le is played by the uniqueness (modulo spatial shifts) of the maximizer of the variational formula in \eqref{rhodef} proved in \cite{L76}. Such a statement is known to hold only for $d=3$ and the Coulomb potential $V(x)=1/|x|$. Given such a uniqueness statement for another rotationally symmetric potential (possibly carrying a different singularity) in $\R^d$ with $d\geq 4$, it is conceivable that the methods of the present article will allow a similar analysis for the corresponding mean-field measures $\widehat\P_t$. However, due to its deep rooted connection to the polaron problem in mathematical physics, we content ourselves only with the present case pertaining to $d=3$ and  $V(x)=1/|x|$.
\end{remark}

\subsection{Review: compactness and large deviations}\label{compactLDP}

\noindent We now turn to the second main ingredient for the proof of Theorem \ref{tubeunifmetric}, which is based on the results derived in \cite{MV14}.
Since this will play an important role in our proof, we take the opportunity to introduce the main idea in \cite{MV14} and review its salient assertions.

Note that the space $\Mcal_1(\R^d)$ of probability measures in $\R^d$ fails to be compact in the weak topology, which is due to several reasons. For instance, 
the location of the mass can shift away to $\infty$ as for the sequence $(\mu\star\delta_{a_n})_n$ with $a_n\to\infty$, 
or the mass can be spread thinly and totally disintegrate into dust, like for a sequence of Gaussians with diverging variance.
Similarly, a mixture like $\mu_n=\frac{1}{2}[\mu\ast\delta_{a_n}+\mu\ast\delta_{-a_n}]$ can split into two (or more) widely separated pieces
if $a_n\to\infty$. To compactify this space one should be allowed to ``center" each such piece separately, as well as to allow some mass to be ``thinly spread and disappear". 
Let
$$
\widetilde\Mcal_1(\R^d)=\{\widetilde\mu\colon \mu\in \Mcal_1(\R^d)\}
$$ 
denote the quotient space of orbits $\widetilde \mu=\{\mu\star \delta_x\colon x\in\R^d\}$ of $\Mcal_1(\R^d)$ under translations.
Then intuitively, for any sequence $(\widetilde\mu_n)_n$ in $\widetilde\Mcal_1(\R^d)$ in the limit, one imagines, an empty, finite or countable collection $\{\alpha_j\colon \, j\in J\}$ 
of sub-probability distributions  that are widely separated with total mass $\sum_{j\in J} \alpha_j (\R^d)=p\le1$ and the remaining mass $1-p$ having totally disintegrated. For example, let $\mu_n$ be a mixture of three Gaussians, one with mean $0$ and variance $1$, one with mean $n$ and variance $1$ and one with mean $0$ and variance $n$, each with equal weight $\frac 13$. Then the limiting object is the collection $\{\widetilde\alpha_1,\widetilde\alpha_1\}$, where $\widetilde\alpha_1$ is the equivalence class of a Gaussian with variance $1$ and weight $\frac 13$.

This intuition naturally inspires the introduction of the space 
$$
\widetilde{\mathcal X}=\Big\{\xi=(\widetilde\alpha_j)_{j\in J}\colon J\mbox{ at most countable}, \alpha_j \in \Mcal_{\leq 1}(\R^d)\,\forall j\in J\Big\}
$$ 
of empty, finite or countable collections of orbits $\{{\widetilde \alpha}_j\colon \, j\in J\}$ of sub-probability distributions $\alpha_j$ having masses $p_j$ with $p=\sum_j p_j\le 1$.
Note that we have a canonical embedding
$$
\widetilde \Mcal_1(\R^d)\hookrightarrow \widetilde{\mathcal X}.
$$
In the proof of Theorem \ref{tubeunifmetric}, the following results will play an important role.

\begin{theorem}[\cite{MV14}, Theorem 3.2]\label{thm1MV14}
There is a metric $\mathbf D$ on $\widetilde {\mathcal X}$ so that $\widetilde\Mcal_1(\R^d)$ is dense in $(\widetilde {\mathcal X}, \mathbf D)$ and any sequence $(\widetilde\mu_n)_n$ in $\widetilde\Mcal_1(\R^d)$ finds a subsequence which converges in the metric $\mathbf D$ to some element $\xi\in \widetilde {\mathcal X}$. In other words, $\widetilde {\mathcal X}$ is the compactification of $\widetilde\Mcal_1(\R^d)$ and also the completion under the metric $\mathbf D$ of the totally bounded space $\widetilde\Mcal_1(\R^d)$.
\end{theorem}

\begin{theorem}[\cite{MV14}, Theorem 4.1]\label{thm2MV14}
 The distribution of the orbits $\widetilde L_t$ of the Brownian occupation measures embedded in the compact metric space $(\widetilde {\mathcal X}, \mathbf D)$ satisfy a strong LDP with the rate function 
$$
\widetilde{ J}(\xi)= \sum_{j\in J} \widetilde I(\widetilde\alpha_j) =\sum_{j\in J} I(\alpha_j),\qquad \xi=(\widetilde\alpha_j)_{j\in J}\in \widetilde {\mathcal X},
$$
where we recall that $I(\cdot)$ is defined in \eqref{Idef} and is shift-invariant and for any $\alpha\in\Mcal_{\leq 1}(\R^d)$, $I(\alpha)$ is a function only of the orbit $\widetilde\alpha$, which we call $\widetilde I(\widetilde \alpha)$. 
\end{theorem}

Let us now choose $d=3$ and recall the transformed path measure $\widehat\P_t$ from \eqref{Phat}.

 \begin{theorem}[\cite{MV14}, Theorem 5.3]\label{thm3MV14}
 The family of distributions of $\widetilde L_t $ under $\widehat\P_t$  satisfies a strong LDP in $\widetilde{\mathcal X}$ with rate function
 $$
{\widehat J}(\xi)= \widehat\rho-\sum_j \bigg\{\int_{\R^3}\int_{\R^3} \frac{1}{|x-y|} \alpha_j(\d x)\alpha_j(\d y)-{\widetilde I}(\widetilde{\alpha}_j)\bigg\}, \qquad\xi=\{\widetilde{\alpha}_j\}\in \widetilde{\mathcal X},
$$
and  $\widehat\rho$ is given by
\begin{equation}\label{rhohat}
\widehat\rho=\sup_{\xi\in\widetilde{\mathcal X}} \sum_j \bigg\{\int_{\R^3}\int_{\R^3} \frac{\psi_j^2(x)\psi_j^2(y)}{|x-y|} \d x\d y-\frac{1}{2}\sum_j\big\|\nabla \psi_j\big\|_2^2\bigg\}
\end{equation}
 and $\alpha_j(\d x)=\psi^2_j(x)\d x$ with $\sum_j \int_{\R^3} \psi_j^2(x) \d x\leq 1$.
\end{theorem}
Let us finally remark that the above theory applies to any shift-invariant functional $f$ of $L_t$, since $f(L_t)=\widetilde f(\widetilde L_t)$ for an obviously defined lifting $\widetilde f$ of $f$ to the space of orbits. 
For example, in Theorem \ref{thm3MV14} the theory was applied to $H(L_t)$, recall \eqref{Hdef}.
In the present paper, such shift-invariant dependence of $\|\Lambda_t\|_\infty$ on $L_t$ is exhibited by the simple identity
$$
\|\Lambda_t\|_\infty= \sup_{y\in\R^3} \bigg(\int_{\R^3} \frac{L_t(\d z)}{|z-y|}\bigg)= \sup_{y\in\R^3} \bigg(\int_{\R^3} \frac{\big(L_t\star \delta_x\big)(\d z)}{|z-y|}\bigg)= \|\Lambda_t\star\delta_x\|_\infty \quad\forall x\in\R^3,
$$
and is of crucial importance in the context of deriving Theorem \ref{tubeunifmetric} from the above theory, see the proof of \eqref{unifclaim4} in Section \ref{sectiontubeunifmetric}.

\section{Super-exponential estimate: Proof of Theorem \ref{superexp}}\label{sectionsuperexp}

\noindent  For any $x\in \R^3$ we will denote by $\P_x$ the Wiener measure for
the Brownian motion $W=(W_t)_{t\geq 0}$ starting at $x$ and by $\E_x$ the corresponding expectation and we continue to write $\P_0=\P$ and $\E_0=\E$. 
First we turn to the proof of Proposition~\ref{expmomLambda}, which follows from the following lemma.

\begin{lemma}\label{lemma1}
For any $\eps\in (0,1/2)$, if $a=1-2\eps$ and $\rho= \frac 1{1-\eps}$, then, for some $\beta\in (0,\infty)$,
\begin{equation}\label{eqlemma1}
\sup_{\heap{x_1,x_2\in\R^3}{|x_1-x_2|\leq 1}} \sup_{x\in \R^3}\E_x \bigg[\exp\bigg\{\beta \bigg(\frac{\big|\Lambda_1(x_1)-\Lambda_1(x_2)\big|}{|x_1-x_2|^{a}}\bigg)^\rho\bigg\}\bigg] <\infty.
\end{equation}
\end{lemma}
\begin{proof}
We fix $x_1, x_2\in \R^3$ with $|x_1-x_2|\leq 1$ and denote
$$
V(y)=V_{x_1,x_2}(y)= \frac 1{|y-x_1|}- \frac 1{|y-x_2|}, \qquad y \in \R^3,
$$
so that $\Lambda_1(x_1)-\Lambda_1(x_2)= \int_0^1 V(W_s) \, \d s$. Then by Jensen's inequality, 
$$
\bigg|\int_0^1V(W_s)\,\d s \bigg|^\rho\leq \int_0^1|V(W_s)|^\rho\,\d s.
$$
Let us now recall Khas'minski's lemma (see \cite[p.8]{S98}, \cite{P76}), which states that, if for a function $\widetilde V\ge 0$,
$$
\sup_{x\in \R^d} \E_{ x}\bigg\{\int_0^1 \widetilde V(W_s)\d s\bigg\}\le \eta<1,
$$
then
$$
\sup_{x\in \R^d} \E_{ x}\bigg\{\exp\bigg\{\int_0^1 \widetilde V(W_s)\d s\bigg\}\bigg\}\le \frac{\eta}{1-\eta} <\infty.
$$
Hence, \eqref{eqlemma1} follows for some $\beta\in (0,\infty)$ if we show that
\begin{equation}\label{eq2lemma1}
\sup_{\heap{x_1,x_2\in\R^3\colon}{|x_1-x_2|\leq 1}}|x_1-x_2|^{-a\rho} \sup_{x\in \R^3} \E_x\bigg[\int_0^1 | V(W_s)|^\rho\,\d s\biggr]<\infty.
\end{equation}
Now let us introduce a generic constant $C$  that does not depend on $x, x_1, x_2, y$, nor on any integration variable, and may change its value from line to line. 

We estimate, for any $x_1,x_2$ satisfying $|x_1-x_2|\leq 1$, and $a=1-2\eps$,
\begin{equation}\label{Vhatesti}
\begin{aligned}
 |V(y)|=\frac{\big| |y-x_2|-|y-x_1|\big|}{|y-x_1|\,|y-x_2|} &\leq \frac{|x_1-x_2|}{|y-x_1|\,|y-x_2|}\\
 &\leq |x_1-x_2|^a\,\, \frac{ \big[|y-x_2|^{1-a}+|y-x_1|^{1-a}\big]}{|y-x_1|\,|y-x_2|}.
 \end{aligned}
\end{equation}
The latter inequality follows from $(r+s)^{1-a}\leq r^{1-a}+s^{1-a}$ for any $r,s\geq0$. Furthermore, let us estimate the integral 
$$
h(y)=h_x(y)=\int_0^1 \d t\, \frac{\e^{-|x-y|^2/2t}}{t^{3/2}}
$$
as follows. Since for any $b>0$, the map $[1,\infty)\ni z\mapsto z^{3/2-b} \e^{-z}$ is bounded, we can estimate
$$
\begin{aligned}
\int_0^{|y-x|^2\wedge 1}\d t\,\frac{\e^{-|y-x|^2/2t}}{t^{3/2}} 
&\leq C|y-x|^{-3-2b}\int_0^{|y-x|^2\wedge 1}\d t\,\e^{-|y-x|^2/2t} \Big(\frac{ |y-x|^2}{2t}\Big)^{3/2+b} t^b\\
&\leq C |y-x|^{-3-2b}\int_0^{|y-x|^2\wedge 1}\d t\,t^b\\
&\leq C |y-x|^{-3-2b}\big(|y-x|^2\wedge 1\big)^{1+b},\qquad x,y\in\R^3.
\end{aligned}
$$
For the remaining integral, we have the upper bound
$$
\int_{|y-x|^2\wedge 1}^1 \d t\,\frac{\e^{-|y-x|^2/2t}}{t^{3/2}} \leq\int_{|y-x|^2\wedge 1}^1 \d t\,t^{-3/2}
\leq \big[|y-x|^2\wedge 1\big]^{-1/2}-1.
$$
Combining the preceding two estimates, we obtain that
\begin{equation}\label{trankernelesti}
h(y)=\int_{0}^1 \d t\,\frac{\e^{-|y-x|^2/2t}}{t^{3/2}} \leq C \frac 1{|y-x|(1+|y-x|)^b}.
\end{equation}
Let us now combine \eqref{Vhatesti} and \eqref{trankernelesti}, to get
$$
\begin{aligned}
|x_1-x_2|^{-a\rho} \,\, \E_x\bigg[\int_0^1 |V(W_s)|^\rho\,\d s\bigg]
&=(2\pi)^{-3/2}\int_{\R^3}\d y\,|x_1-x_2|^{-a\rho} |\widehat V(y)|^\rho \, h(y)\\
&\leq C\int_{\R^3}\d y\,\frac{ |y-x_2|^{\rho(1-a)}+|y-x_1|^{\rho(1-a)}}{|y-x_1|^\rho\,|y-x_2|^\rho}\,\frac 1{|y-x|(1+|y-x|)^b}.
\end{aligned}
$$
Taking the symmetry in $x_1$ and $x_2$ into account, we see that \eqref{eq2lemma1} follows once we have
$$
\sup_{\heap{x_1,x_2\in\R^3\colon}{|x_1-x_2|\leq 1}}\sup_{x\in\R^3}\int_{\R^3}\,\frac {\d y}{(1+|y-x|)^b}\,\frac1{|y-x_1|^\rho}\times\frac 1{|y-x|}\times\frac1{|y-x_2|^{\rho a}}<\infty.
$$
For this, we apply H\"older's inequality to the measure $\frac {\d y}{(1+|y-x|)^b}$ and the other three functions with parameters $p_1,p_2,p_3>1$ satisfying $\frac 1{p_1} + \frac 1{p_2} +\frac 1{p_3}=1$. Hence, it suffices to show that all the integrals
$$
\int_{\R^3}\,\frac {\d y}{(1+|y-x|)^b}\,\frac 1{|y-x_1|^{\rho p_1}},\qquad \int_{\R^3}\,\frac {\d y}{(1+|y-x|)^b}\, \frac 1{|y-x|^{p_2}},\qquad \int_{\R^3}\,\frac {\d y}{(1+|y-x|)^b}\, \frac 1{|y-x_2|^{\rho a p_3}},
\qquad 
$$
are bounded in $x,x_1,x_2$ for proper choices of $p_1,p_2,p_3$ and $b$. But this is ensured by requiring $b>3$ and $p_1<3/\rho$ and $p_2=\rho p_1$ (enforcing that $p_3=p_1\rho/(p_1\rho-\rho+1)$) and $p_3<3/a\rho$. The latter mean that $\frac {3(\rho-1)}{\rho(3-\rho a)}<p_1<\frac 3\rho$ and are possible as soon as $4>\rho(1+a)$. But this is satisfied for our choices $\rho=\frac 1{1-\eps}$ and $a=1-2\eps$, for any $\eps\in(0,1)$.

This finishes the proof of Lemma \ref{lemma1}.
\end{proof}

\begin{lemma}\label{expmomLambdafullspace}
Fix $\eps\in (\frac 13,\frac 12)$ and choose $a=1-2\eps$ and $\rho=\frac 1 {1-\eps}$ as in Lemma \ref{lemma1}. Then there exists a constant $\beta_1=\beta_1(\eps)>0$ such that the random variable
\begin{equation}\label{Mdef}
M=\int_{\R^3}\d x_1 \int_{\R^3}\d x_2\,\1\{|x_1- x_2|\leq 1\} \,\, \bigg[\exp\bigg\{\beta_1 \bigg(\frac{\big|\Lambda_1(x_1)-\Lambda_1(x_2)\big|}{|x_1-x_2|^{a}}\bigg)^\rho\bigg\}-1\bigg]
\end{equation}
has a finite expectation under $\P_0$.
\end{lemma}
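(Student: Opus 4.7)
By Tonelli's theorem,
\[
\E_0[M] \;=\; \iint_{\{|x_1-x_2|\le 1\}}\E_0\bigl[e^{\beta_4 u(x_1,x_2)}-1\bigr]\,\d x_1\,\d x_2,\qquad u(x_1,x_2):=\Bigl(\frac{|\Lambda_1(x_1)-\Lambda_1(x_2)|}{|x_1-x_2|^{a}}\Bigr)^{\!\rho}.
\]
The plan is to pick $\beta_4\le\beta_2/2$ with $\beta_2$ as in Lemma~\ref{expmomLambda}, so that both $\E_0[e^{\beta_4 u}]$ and $\E_0[e^{2\beta_4 u}]$ are bounded by $K_2$ uniformly in $(x_1,x_2)$. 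This uniform bound alone is not enough because the region $\{|x_1-x_2|\le 1\}$ has infinite Lebesgue measure in $\R^6$; I need to extract additional spatial decay as $|x_1|\to\infty$. Such decay will come from the fact that, on the time interval $[0,1]$, the Brownian motion remains close to the origin with overwhelming probability, so that for large $|x_1|$ (and hence large $|x_2|$) the two Coulomb singularities seen by the path are both far away and nearly indistinguishable.

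Fix a radius $R\ge 4$ and split the outer integration. On $\{|x_1|\le R\}$, the contribution is at most $\Leb(B_R)\cdot\Leb(B_1)\cdot K_2<\infty$. On $\{|x_1|>R\}$, introduce the event $E:=\{\sup_{s\in[0,1]}|W_s|\le|x_1|/2\}$. On $E$ we have $|W_s-x_i|\ge|x_1|/4$ for all $s\in[0,1]$ and $i=1,2$ (using $|x_1-x_2|\le 1\le|x_1|/4$), whence by $||a|^{-1}-|b|^{-1}|\le|a-b|/(|a||b|)$,
\[
|\Lambda_1(x_1)-\Lambda_1(x_2)|\ \le\ \frac{16\,|x_1-x_2|}{|x_1|^2},\qquad\text{so}\qquad u\ \le\ \frac{16^\rho\,|x_1-x_2|^{\rho(1-a)}}{|x_1|^{2\rho}}\ \le\ \frac{16^\rho}{|x_1|^{2\rho}},
\]
since $1-a=2\eps>0$ and $|x_1-x_2|\le 1$. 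For $R$ large, this yields $u\le 1$ on $E$, hence $e^{\beta_4 u}-1\le C_\rho\,|x_1|^{-2\rho}$ on $E$. On $E^c$, use $e^{\beta_4 u}-1\le e^{\beta_4 u}$, Cauchy--Schwarz and the Gaussian maximal inequality $\P_0(E^c)\le C e^{-c|x_1|^2}$ to obtain
\[
\E_0\bigl[(e^{\beta_4 u}-1)\1_{E^c}\bigr]\ \le\ \E_0[e^{2\beta_4 u}]^{1/2}\,\P_0(E^c)^{1/2}\ \le\ K_2^{1/2}\,C\,e^{-c|x_1|^2/2}.
\]

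Combining both pieces and integrating $x_2$ over a unit ball (a bounded factor), the tail contribution is dominated by $\int_{|x_1|>R}\bigl(|x_1|^{-2\rho}+e^{-c|x_1|^2/2}\bigr)\,\d x_1$, which is finite iff $2\rho>3$. Since $\rho=(1-\eps)^{-1}$, this is equivalent to $\eps>1/3$, which is precisely the hypothesis of the lemma. The main---and really the only---delicate point is this matching between the H\"older exponent $a=1-2\eps$ (and the resulting $\rho$) delivered by Lemma~\ref{expmomLambda} and the dimension-three integrability condition at infinity: the assumption $\eps>1/3$ is sharp for the present approach. The remaining ingredients---Tonelli, splitting on $|x_1|\le R$ versus $|x_1|>R$, the elementary mean-value estimate for $1/|\cdot|$, Cauchy--Schwarz, and the Gaussian tail of the Brownian maximum---are routine.
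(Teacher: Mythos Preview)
Your argument is correct and follows essentially the same route as the paper's proof: both apply Tonelli, split according to whether the Brownian path on $[0,1]$ stays close to the origin (your event $E$; the paper's event $\{\tau_n>1\}$ with $\tau_n=\inf\{t:|W_t|>n-n^\alpha\}$), use a deterministic bound on the H\"older quotient on the ``good'' event and Cauchy--Schwarz together with Lemma~\ref{expmomLambda} and the Gaussian tail of the Brownian maximum on the complement, and arrive at the same integrability condition $2\rho>3\Leftrightarrow \eps>1/3$. Your version is in fact slightly cleaner: the paper decomposes into annuli $\{n\le|x_1|<n+1\}$ and carries an auxiliary parameter $\alpha\in(0,1)$ (requiring $\alpha>3/(2\rho)$), whereas your direct choice of threshold $|x_1|/2$ removes that extra bookkeeping without changing the substance.
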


\begin{proof}
By Lemma \ref{lemma1} and Fubini's theorem, it suffices to show that 
\begin{equation}\label{claim2}
\int\int_{|x_1- x_2|\leq 1} \d x_1 \d x_2 \,\, \E\bigg[\exp\bigg\{\beta_1 \bigg(\frac{\big|\Lambda_1(x_1)-\Lambda_1(x_2)\big|}{|x_1-x_2|^{a}}\bigg)^\rho\bigg\}-1\bigg] < \infty.
\end{equation}

We decompose $\R^3\subset \bigcup_{n=0}^\infty\big\{x\in\R^3\colon n\leq |x|< n+1\big\}$ and put $\tau_n= \inf\{t>0\colon\, |W_t| > n- n^\alpha\}$ for some $\alpha\in (0,1)$. 
For any $r>0$ and $x\in\R^3$, we also denote by $B_r(x)$ the open Euclidean ball of radius $r$ around $x$.
Then 
\begin{equation}\label{taubigless1}
\begin{aligned}
&\int\int_{|x_1- x_2|\leq 1} \d x_1 \d x_2 \,\, \E\bigg[\exp\bigg\{\beta_1 \bigg(\frac{\big|\Lambda_1(x_1)-\Lambda_1(x_2)\big|}{|x_1-x_2|^{a}}\bigg)^\rho\bigg\}-1\bigg] \\
&\leq \sum_{n=0}^\infty \int_{|x_1|\in [n,n+1)} \d x_1 \int_{B_1(x_1)} \d x_2 \,\, \bigg[ \E\bigg\{\1_{\{\tau_n>1\}} \bigg(\exp\bigg\{\beta_1 \bigg(\frac{\big|\Lambda_1(x_1)-\Lambda_1(x_2)\big|}{|x_1-x_2|^{a}}\bigg)^\rho\bigg\}-1\bigg)\bigg\}
\\
&\qquad\qquad\qquad\qquad\qquad+\E\bigg\{\1_{\{\tau_n\leq1\}} \bigg(\exp\bigg\{\beta_1 \bigg(\frac{\big|\Lambda_1(x_1)-\Lambda_1(x_2)\big|}{|x_1-x_2|^{a}}\bigg)^\rho\bigg\}-1\bigg\}\bigg)\bigg].
\end{aligned}
\end{equation}
The first expectation inside the integrals is handled as follows. We note that, with $|x_1| \in [n,n+1)$  and $x_2\in B_1(x_1)$, if $\tau_n >1$, then $|W_s- x_1| > n^\alpha$ and $|W_s- x_2| > n^\alpha - 1$ for any $s\in[0,1]$. Hence, for any $n\in\N$, on the event $\{\tau_n>1\}$,
$$ 
\frac{\big|\Lambda_1(x_1)-\Lambda_1(x_2)\big|}{|x_1-x_2|^{a}} \leq \frac {|x_1-x_2|}{|x_1-x_2|^{1-2\eps}} \, \int_0^1 \frac {\d s}{|W_s -x_1| |W_s- x_2|}\leq c_1 |x_1-x_2|^{2\eps} n^{-2\alpha}\leq c_1  n^{-2\alpha}.
$$
Hence, 
\begin{equation}\label{taubig1}
\begin{aligned}
&\sum_{n=0}^\infty \int_{|x_1|\in [n,n+1)} \d x_1 \int_{B_1(x_1)} \d x_2 \,\, \E\bigg\{\1_{\{\tau_n>1\}} \bigg(\exp\bigg\{\beta_1 \bigg(\frac{\big|\Lambda_1(x_1)-\Lambda_1(x_2)\big|}{|x_1-x_2|^{a}}\bigg)^\rho\bigg\}-1\bigg)\bigg\} \\
&\leq \sum_{n=0}^\infty \bigg(\e^{\beta_1 c_1^\rho n^{-2\alpha\rho}} - 1\bigg) \mbox{Leb}\bigg\{x_1\in \R^3\colon\,|x_1|\in [n,n+1)\bigg\}\mbox{Leb}(B_1(0)).
\end{aligned}
\end{equation}
Since the first term is of size $O(n^{-2\alpha\rho})$ and the first Lebesgue measure is of size $O(n^2)$, the above sum is finite for $\alpha> \frac 3{2\rho}$. Since we chose $\eps>\frac 13$ and hence $\rho= \frac 1 {1-\eps} > \frac 32$, we can choose some $\alpha\in (0,1)$ so that $\alpha > \frac 3 {2\rho}$, as desired. 

Let us now handle the second expectation in \eqref{taubigless1}. By the Cauchy-Schwarz inequality and Proposition \ref{expmomLambda}, if $\beta_1$ is small enough, for any $x_1,x_2\in\R^3$ such that $|x_1-x_2|\leq1$,
$$
\begin{aligned}
\E\bigg[\1_{\{\tau_n\leq1\}} &\bigg\{\exp\bigg\{\beta_1 \bigg(\frac{\big|\Lambda_1(x_1)-\Lambda_1(x_2)\big|}{|x_1-x_2|^{a}}\bigg)^\rho\bigg\}-1\bigg\}\bigg] \\
&\leq \P\big(\tau_n\leq 1\big)^{\frac 12} \,\, \E\bigg[\exp\bigg\{2\beta_1 \bigg(\frac{\big|\Lambda_1(x_1)-\Lambda_1(x_2)\big|}{|x_1-x_2|^{a}}\bigg)^\rho\bigg\}\bigg]^{\frac 12} \\
&\leq C \P\bigg(\max_{[0,1]} W > n- n^\alpha\bigg)^{\frac 12},
\end{aligned}
$$
where $C$ does not depend on $x_1,x_2$. Since the last probability is of order $\e^{-c n^2}$, the second sum on $n$ in \eqref{taubigless1} is obviously finite. This, combined with the finiteness of the sum in \eqref{taubig1}, proves \eqref{claim2} and hence  finishes the proof of  Lemma~\ref{expmomLambdafullspace}.
\end{proof}

For the proof of Theorem \ref{superexp} we will use the following (multidimensional) estimate of Garsia-Rodemich-Rumsey \cite[p.\,60]{SV79}.

\begin{lemma}\label{GRR}
Let $p(\cdot)$ and $\Psi(\cdot)$ be strictly increasing continuous functions on $[0,\infty)$ so that $p(0)=\Psi(0)=0$ and $\lim_{t\uparrow\infty} \Psi(t)=\infty$. If $f\colon \R^d\to \R$ is continuous on the closure of the ball $B_{2r}(z)$ for some $z\in \R^d$ and $r>0$, then the bound 
\begin{equation}\label{GRR1}
\int_{B_r(z)} \d x \int_{B_r(z)}  \d y\,\, \Psi\bigg(\frac{|f(x)-f(y)|}{p(|x-y|)}\bigg) \leq M<\infty,
\end{equation}
implies that
\begin{equation}\label{GRR2}
\big|f(x)- f(y)\big| \leq 8 \int_0^{2|x-y|} \Psi^{-1}\bigg(\frac{M}{\gamma u^{2d}}\bigg) \, p(\d u), \qquad x,y\in B_r(z),
\end{equation}
for some constant $\gamma$ that depends only on $d$.
\end{lemma}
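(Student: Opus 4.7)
The plan is a dyadic chaining argument, which is the standard way to prove the Garsia--Rodemich--Rumsey inequality. Normalize $z=0$ and fix $x, y \in B_r(0)$ with $\ell := |x-y| > 0$. Write $h(u,v) := \Psi\big(|f(u)-f(v)|/p(|u-v|)\big)$, so that the hypothesis reads $\iint_{B_r \times B_r} h(u,v)\,\d u\,\d v \leq M$. The strategy is to build two sequences $x = x_0, x_1, \ldots$ and $y = y_0, y_1, \ldots$ at the dyadic scales $\ell_n := 2^{-n}\ell$ that meet up near the midpoint of $[x,y]$, and to bound $|f(x)-f(y)|$ by a telescoping sum along the concatenated chain.

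The workhorse is the following averaging observation: for any Borel sets $A, A' \subset B_r(0)$ of positive Lebesgue measure, Fubini and Markov's inequality show that the set of $(u,u') \in A \times A'$ with $h(u,u') \leq 4M/(|A|\,|A'|)$ has measure at least $\tfrac{3}{4}|A|\,|A'|$. This leaves enough room to simultaneously impose several such bounds on consecutive links of the chain, so that the recursion can always be continued. Applying $\Psi^{-1}$ converts the bound on $h(u,u')$ into
\[
|f(u)-f(u')| \leq p(|u-u'|)\,\Psi^{-1}\!\bigg(\frac{4M}{|A|\,|A'|}\bigg).
\]
To run the chaining I would choose $x_{n+1}$ in a ball $A_n$ centered near $x_n$ of radius $\sim \ell_{n+1}$, hence of volume $\sim \ell_{n+1}^d$, in such a way that $h(x_n, x_{n+1}) \leq M/(\gamma\, \ell_{n+1}^{2d})$ for a dimensional constant $\gamma$, while still leaving the same type of bound available when selecting $x_{n+2}$ in the next step (this is why the positive-measure clause above is needed, rather than mere existence of a single good pair). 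The sequence $(y_n)$ is constructed symmetrically.

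Since $\sum_n |x_n - x_{n+1}| \lesssim \sum_n \ell_{n+1} < \infty$, the limits $x_\infty, y_\infty$ exist, and by construction they can be arranged to coincide or be connected by one further comparable link; continuity of $f$ on $\overline{B_{2r}(z)}$ then yields
\[
|f(x)-f(y)| \leq \sum_{n=0}^\infty \Big[p(|x_n-x_{n+1}|) + p(|y_n-y_{n+1}|)\Big]\, \Psi^{-1}\!\bigg(\frac{M}{\gamma\, \ell_{n+1}^{2d}}\bigg).
\]
Since $u \mapsto \Psi^{-1}(M/(\gamma u^{2d}))$ is non-increasing and consecutive links span disjoint sub-intervals $(\ell_{n+1}, \ell_n]$ of the range of scales, this sum is dominated by the Stieltjes integral $\int_0^{2|x-y|} \Psi^{-1}(M/(\gamma u^{2d}))\,p(\d u)$, and bookkeeping the numerical constants produces the factor $8$ in \eqref{GRR2}. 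The main obstacle is precisely this bookkeeping: tracking $\gamma$ through the product of ball volumes $|A_n|\,|A_{n+1}| \sim \ell_{n+1}^{2d}$ so that the factor $u^{2d}$ emerges in the denominator as written, and scheduling the averaging step so that at every $n$ a positive-measure set of admissible next links remains. These points are routine but delicate, which is why the authors simply appeal to Stroock--Varadhan \cite{SV79} for the detailed proof.
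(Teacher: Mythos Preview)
Your sketch is the standard dyadic chaining proof of the Garsia--Rodemich--Rumsey inequality and is correct in outline; you also correctly observe that the paper does not prove this lemma at all but simply quotes it from \cite[p.\,60]{SV79}. There is therefore nothing to compare: the paper's ``proof'' is a citation, and your proposal supplies precisely the argument one finds in that reference.
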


Finally we are ready to prove Theorem \ref{superexp}. 

{\bf{Proof of Theorem \ref{superexp}.}} 
The Brownian scaling property implies that
$$
\Lambda_t(x)= \frac 1t \int_0^t \frac 1{|W_s-x|}\, \d s 
= \int_0^1 \frac 1 {|W(ts)-x|} \,\d s 
\stackrel{{\mathcal D}}{=} \int_0^1 \frac 1 {|\sqrt t W(s)- x|}\, \d s 
=t^{-1/2} \Lambda_1(x t^{-1/2}),
$$
where $\stackrel{{\mathcal D}}{=}$ denotes equality in distribution. Hence, the claim of Theorem \ref{superexp} is equivalent to
\begin{equation}\label{claim5}
\lim_{\delta\to 0}\limsup_{t\to\infty}\frac 1t \log \P\bigg\{ \sup_{x_1,x_2\in\R^3\colon |x_1- x_2|\leq \delta t^{-1/2}} \, \big| \Lambda_1(x_1)- \Lambda_1(x_2)\big| \geq b t^{1/2}\bigg\}=-\infty, \qquad b>0.
\end{equation}

Now we would like to apply Lemma \ref{GRR}. We pick $\eps\in (\frac 13, \frac 12)$ and $a=1-2\eps$ and $\rho= \frac 1 {1-\eps}$ and $\beta=\beta_1$ as in Lemma \ref{expmomLambdafullspace} and choose
\begin{equation}\label{choices}
\Psi(x)= \e^{\beta |x|^\rho}-1, \qquad p(x)= |x|^{a }= |x|^{1-2\eps}, \qquad f(x)=\Lambda_1(x).
\end{equation}
Then $\Psi(\cdot)$, $p(\cdot)$ and $f(\cdot)$ all satisfy the requirements of Lemma \ref{GRR}. Furthermore, Lemma \ref{expmomLambdafullspace} implies that hypothesis \eqref{GRR1} is satisfied if $|x_1- x_2|\leq \delta$ and $\delta>0$ is chosen small enough, where the random variable $M$ is given in \eqref{Mdef}.
Hence, \eqref{GRR2} implies that for $|x_1-x_2|\leq\delta t^{-1/2}$ and all $t\geq 1$, 
\begin{equation}\label{GRR3}
\big|\Lambda_1(x_1)-\Lambda_1(x_2)\big| \leq 8 \int_0^{\delta t^{-1/2}} \Psi^{-1}\bigg(\frac M {\gamma u^{6}}\bigg) \,p(\d u)
=8\frac {1-2\eps}{\beta^{1/\rho}} \int_0^{\delta t^{-1/2}} \,  \log\bigg(1+\frac{M}{\gamma u^6}\bigg)^{1/\rho} \, u^{-2\eps} \, \d u.
\end{equation}
For $u\in(0,\delta t^{-1/2}]$ and all sufficiently large $t$, we estimate
$$
8\frac {1-2\eps}{\beta^{1/\rho}}\log\bigg(1+\frac{M}{\gamma u^6}\bigg)^{1/\rho} \leq C \Big((\log (M\vee 1))^{1/\rho}+(\log \sfrac 1u)^{1/\rho}\Big),
$$
for some constant $C$ that does not depend on $t$ if $t$ is sufficiently large. Hence, the right-hand side of \eqref{GRR3} is not larger than
$$
C_\delta(\log (M\vee 1))^{1/\rho}t^{\eps-1/2}+C_\delta (\log t)^c t^{\eps-1/2}
$$
for some $C_\delta, c$, not depending on $t$, and $C_\delta\to 0$ as $\delta\to 0$. Substituting this in \eqref{GRR3} and recalling that $\rho=\frac 1 {1-\eps}$, we obtain
\begin{equation}\label{claim6}
\begin{aligned}
\P\bigg\{ &\sup_{x_1,x_2\in\R^3\colon |x_1- x_2|\leq \delta t^{-1/2}} \, \big| \Lambda_1(x_1)- \Lambda_1(x_2)\big| \geq bt^{1/2}\bigg\} \leq \P\bigg\{(\log (M\vee 1))^{1/\rho} +(\log t)^c\geq \frac{b}{C_\delta} t^{1-\eps}\bigg\}\\
&\leq\P\bigg\{ \log (M\vee 1)\geq \,\frac{b^\rho}{C_\delta^{\rho}}\, t-C_2(\log t)^{c\rho})\bigg\}
\leq \E(M\vee 1)\exp\bigg\{-\frac{b^\rho}{C_\delta^{\rho}}\, t+C_2(\log t)^{c\rho}\bigg\}.
\end{aligned}
\end{equation}
Recall that by Lemma \ref{expmomLambdafullspace}, $\E(M\vee 1)<\infty$. If we now let $t\to\infty$, followed by $\delta\to 0$, the above estimate now implies \eqref{claim5} and therefore Theorem \ref{superexp}.
\qed

\begin{cor}\label{P_tsuperexp}
For any $b>0$, 
$$
\lim_{\delta\to0}\limsup_{t\to\infty}\frac 1t \log\widehat\P_t\bigg\{\sup_{x_1,x_2\in\R^3\colon |x_1- x_2|\leq \delta} \, \big| \Lambda_t(x_1)- \Lambda_t(x_2)\big| \geq b\bigg\} =-\infty.
$$
\end{cor}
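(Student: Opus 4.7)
The aim is to transfer the super-exponential estimate of Theorem \ref{superexp} from $\P$ to $\widehat\P_t$ via the Cauchy-Schwarz inequality; the only additional ingredient needed is a control of the second exponential moment of $tH(L_t)$ under $\P$, which reduces to the partition function $Z_{4t}$ via Brownian scaling.

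Set $A_t^\delta=\bigl\{\sup_{x_1,x_2\in\R^3\colon|x_1-x_2|\leq\delta}|\Lambda_t(x_1)-\Lambda_t(x_2)|\geq b\bigr\}$. By the definition \eqref{Phat} of $\widehat\P_t$ and the Cauchy-Schwarz inequality,
$$
\widehat\P_t(A_t^\delta)\;=\;Z_t^{-1}\,\E\bigl[\1_{A_t^\delta}\,e^{tH(L_t)}\bigr]\;\leq\;Z_t^{-1}\,\P(A_t^\delta)^{1/2}\,\E\bigl[e^{2tH(L_t)}\bigr]^{1/2}.
$$
To handle the exponential moment, substitute $W_s=\tfrac12\widetilde W_{4s}$ with $\widetilde W$ a standard Brownian motion. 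A direct change of variables gives $2tH(L_t)\stackrel{\mathcal D}{=}4t\,H(\widetilde L_{4t})$, and therefore $\E[e^{2tH(L_t)}]=Z_{4t}$. The Donsker--Varadhan asymptotics \eqref{rhodef} yield $Z_T=e^{\rho T+o(T)}$, so $\E[e^{2tH(L_t)}]^{1/2}=e^{2\rho t+o(t)}$ and $Z_t^{-1}=e^{-\rho t+o(t)}$, which combined give
$$
\widehat\P_t(A_t^\delta)\;\leq\;\P(A_t^\delta)^{1/2}\,e^{\rho t+o(t)}.
$$

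Taking $\tfrac1t\log$ of both sides and then $\limsup_{t\to\infty}$ gives
$$
\limsup_{t\to\infty}\tfrac1t\log\widehat\P_t(A_t^\delta)\;\leq\;\rho\,+\,\tfrac12\,\limsup_{t\to\infty}\tfrac1t\log\P(A_t^\delta).
$$
By Theorem \ref{superexp} the right-hand side tends to $-\infty$ as $\delta\to 0$, which is the claim. The only nontrivial step is the identity $\E[e^{2tH(L_t)}]=Z_{4t}$, obtained cheaply from Brownian scaling and reducing the desired exponential moment to the partition function already controlled by Donsker and Varadhan; the same scheme works with any H{\"o}lder exponent $p>1$ at the cost of replacing the constant $\rho$ in the final inequality by $(p-1)\rho$, which is harmless since only the $\delta\to 0$ limit matters.
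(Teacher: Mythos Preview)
Your proof is correct and follows essentially the same approach as the paper: apply Cauchy--Schwarz to split $\widehat\P_t(A_t^\delta)$ into $\P(A_t^\delta)^{1/2}$ and the second exponential moment $\E[e^{2tH(L_t)}]^{1/2}$, then invoke Theorem~\ref{superexp}. The paper simply asserts that $\tfrac1{2t}\log\E[e^{2tH(L_t)}]$ has a finite large-$t$ limit, whereas you supply an explicit justification via the Brownian-scaling identity $\E[e^{2tH(L_t)}]=Z_{4t}$, which is a clean way to reduce the point to \eqref{rhodef}.
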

\begin{proof}
Let us denote by $A_{t,\delta}$ the above event inside the probability. Then the Cauchy-Schwarz inequality gives that
$$
\frac 1t \log \widehat\P_t\big\{A_{t,\delta}\big\}\leq \frac 1{2 t} \log \E\big\{\e^{2t H(L_t)}\big\}- \frac 1t \log \E\big\{\e^{t H(L_t)}\big\} + \frac 1 2 \frac 1t \log \P\big\{A_{t,\delta}\big\}.
$$ 
While the first two terms have finite large-$t$ limits,  by Theorem \ref{superexp} the large-$t$ limit of the third term tends to $-\infty$ as $\delta\to0$. This proves the corollary.
\end{proof}

\section{LDP for $\Lambda_t$ in the uniform metric: Proof of Theorem \ref{tubeunifmetric}}\label{sectiontubeunifmetric}
Recall that we need to show, for any $\eps>0$, 
\begin{equation}\label{unifclaim1}
\limsup_{t\to\infty}\frac 1t \log \widehat \P_t \bigg\{\inf_{w\in \R^3} \big\|\Lambda_t- \Lambda\psi_w^2\big\|_\infty \geq \eps\bigg\} <0. 
\end{equation}
We approximate the sup-norm inside the probability via  a coarse graining argument as follows. For any $\delta\in(0,1)$, we can estimate 
\begin{equation}\label{unifest1}
\begin{aligned}
\inf_{w\in \R^3} \big\|\Lambda_t- \Lambda\psi_w^2\big\|_\infty
&= \inf_{w\in \R^3} \sup_{x\in \R^3}\big|\Lambda_t(x)- \big(\Lambda\psi_w^2)(x)\big| \\
&\leq \sup_{x_1,x_2\in\R^3\colon |x_1-x_2|\leq \delta} \big|\Lambda_t(x_1)-\Lambda_t(x_2)\big| \\
&\qquad+ \, \inf_{w\in\R^3} \sup_{z\in \delta \Z^3} \bigg[ \big|\Lambda_t(z)-\big(\Lambda\psi_w^2)(z)\big| + \, \sup_{\tilde z \in B_\delta(z)} \big| \big(\Lambda\psi_w^2\big)(\tilde z)-\big(\Lambda\psi_w^2\big)(z)\big|\bigg].
\end{aligned}
\end{equation}
Note that, for any $w\in \R^3$ the deterministic function $\Lambda\psi_w^2$ is uniformly continuous on $\R^3$ and hence 
$$
\lim_{\delta\downarrow 0} \sup_{z\in \delta \Z^3}\sup_{\tilde z \in B_\delta(z)} \big| \big(\Lambda\psi_w^2\big)(\tilde z)-\big(\Lambda\psi_w^2\big)(z)\big|=0.
$$
Since $\eps>0$ is arbitrary, the above fact and Corollary \ref{P_tsuperexp} imply that, to deduce \eqref{unifclaim1}, it suffices to prove, for any $\eps,\delta>0$,  
\begin{equation}\label{unifclaim2}
\limsup_{t\to\infty} \frac 1t \log \widehat\P_t\bigg\{ \inf_{w\in\R^3}\, \sup_{z\in \delta\Z^3} \big| \Lambda_t(z)-\big(\Lambda\psi_w^2)(z)\big| \geq \eps\bigg\}<0.
\end{equation}
For any $z\in \delta\Z^3$, $w\in \R^3$ and any $\eta>0$, we can estimate
\begin{equation}\label{unifest2}
\big| \Lambda_t(z)-\big(\Lambda\psi_w^2)(z)\big| 
\leq \int_{B_\eta(z)} \frac{\psi_w^2(y)}{|y-z|}\, \d y + \int_{B_\eta(z)} \frac{L_t(\d y)}{|y-z|} + \bigg| \int_{\R^3} \frac{\1\{|y-z|\geq \eta\}}{|y-z|} \bigg(L_t(\d y)- \psi_w^2(y)\d y\bigg)\bigg|.
\end{equation}
The first term can be handled easily. Note that, for any $w\in \R^3$, $\psi_w$ is radially symmetric and $\|\psi_w\|_2=1$. Hence using polar coordinates and invoking the dominated convergence theorem we can argue 
that 
\begin{equation}\label{unifestdet}
\lim_{\eta\to 0} \sup_{z\in \delta \Z^3}  \int_{B_\eta(z)} \frac{\psi_w^2(y)}{|y-z|} \d y=0.
\end{equation}
Let us turn to the second term in \eqref{unifest2}. We claim that, for any $\delta>0$ and $\eta>0$ small enough, 
\begin{equation}\label{unifclaim3}
 \limsup_{t\to\infty} \frac 1t \log \widehat\P_t\bigg\{ \sup_{z\in \delta\Z^3} \int_{B_\eta(z)}\frac{L_t(\d y)}{|y-z|}\geq \eps \bigg\}<0.
\end{equation}
Let us first handle the above event with the Wiener measure $\P$ replacing $\widehat\P_t$.  Then we can estimate
\begin{equation}\label{unifest3}
\P\bigg\{ \sup_{z\in \delta\Z^3} \int_{B_\eta(z)}\frac{L_t(\d y)}{|y-z|}>\eps \bigg\} \leq \sum_{\heap{z\in \delta\Z^3}{|z|\leq t^2}}  \P\bigg\{\int_{B_\eta(z)}\frac{L_t(\d y)}{|y-z|}\geq \eps/2 \bigg\} 
\,\,+\P\bigg\{ \sup_{\heap{z\in \delta\Z^3}{|z|> t^2}} \int_{B_\eta(z)}\frac{L_t(\d y)}{|y-z|}\geq \eps/2 \bigg\}.
\end{equation}
The second term can be estimated by the probability that the Brownian path, starting at origin, travels a distance $t^2-\eps$ by time $t$. This probability is of order $\exp\{-c t^3\}$ and can be ignored. 
For the first term we note that a box of size $t^2$ in $\R^3$ can be covered by $O(t^6)$ sub-boxes of side length $\delta$ and that the probability is maximal for $z=0$. Hence, we can estimate, with the help of Markov's inequality,
for any $\beta>0$,
\begin{equation}\label{unifest4}
\sum_{\heap{z\in \delta\Z^3}{|z|\leq t^2}}  \P\bigg\{\int_{B_\eta(z)}\frac{L_t(\d y)}{|y-z|}>\eps/2 \bigg\} 
\leq Ct^6 \, \P\bigg\{\beta\int_0^t V_\eta(W_s) \d s>t \beta\eps/2  \bigg\}
\leq C t^6 \e^{-\frac \eps 2 t \beta} \,\,\E\bigg\{\e^{\beta\int_0^t V_\eta(W_s) \,\d s}\bigg\},
\end{equation}
where $V_\eta(x) =\1_{\{|x|\leq \eta\}} \, \frac 1 {|x|}$. Note that, for any $\beta>0$ and some constants $c_1,c_2$ independent of $\eta$,
$$
\sup_{y\in \R^3} \E_y \bigg\{\beta\int_0^{1} V_\eta(W_s)\, \d s\bigg\} \leq \beta \int_{B_\eta(0)} \frac {\d x}{|x|} \int_0^{1} p_s(0,x) \,\d s \leq  \beta c_1\int_{B_\eta(0)} \frac {\d x}{|x|^2}= c_2 \eta \beta.
$$
For any fixed $\beta>0$ and $\eta$ small enough, this is not larger than $1/2$, and by Khas'minskii's lemma (\cite[p.~8]{S98}), successive conditioning and the Markov property,
$$
\E\bigg\{\e^{\beta\int_0^t V_\eta(W_s) \d s}\bigg\} \leq 2^{\lceil t \rceil }.
$$
Then \eqref{unifest4} and \eqref{unifest3} imply, for any $\beta>0$, 
$$
 \limsup_{t\to\infty} \frac 1t \log \P\bigg\{ \sup_{z\in \delta\Z^3} \int_{B_\eta(z)}\frac{L_t(\d y)}{|y-z|}\geq \eps \bigg\} \leq -\eps\beta/2 + \log 2.
$$
From this we can deduce \eqref{unifclaim3} by choosing $\beta>0$ large enough and invoking H\"older's inequality as in the proof of Corollary \ref{P_tsuperexp}. We
drop the details to avoid repetition. 

Let us turn to the third term on the right hand side of \eqref{unifest2}. Then by \eqref{unifestdet} and \eqref{unifclaim3}, it suffices to prove that, for every $\eta, \eps>0$,
\begin{equation}\label{unifclaim4}
\limsup_{t\to\infty} \frac 1t \log \widehat\P_t\big\{L_t \in F_\eta\big\} < 0,
\end{equation}
where
$$
F_\eta=\bigg\{ \mu\in \Mcal_1(\R^3)\colon \,\,\forall w\in \R^3\,\,\sup_{z\in \R^3} \big|\big\langle f_{z,\eta}, \mu- \psi_w^2\big\rangle \big| \geq \eps \bigg\},
$$
where we put $f_{z,\eta}(y)= \frac 1{|y-z|}\wedge \frac 1\eta$. We claim that for each $\eta>0$, $F_\eta$ is a closed set in the weak topology in $\Mcal_1(\R^3)$. First note that the family $\mathcal A_\eta= \{f_{z,\eta}\colon z \in \R^d\}$ is equicontinuous and uniformly bounded. Hence, for any $\eta>0$, the set
$$
G_{\eta,w}= \bigg\{\mu\in\Mcal_1(\R^3)\colon  \sup_{f\in\mathcal A_\eta} \big|\big\langle f, \mu- \psi_w^2\big\rangle \big| < \eps\bigg\}
$$
is weakly open and hence 
$$
F_\eta= \bigcap_{w\in \R^d} \,\,G_{\eta,w}^{\rm c}
$$
is weakly closed. Furthermore, we note that $F_\eta$ is shift-invariant, i.e., if $\mu\in F_\eta$, then $\mu\star \delta_x\in F_\eta$ for any $x\in\R^3$. In other words, 
$$
 \widehat\P_t\big\{L_t \in F_\eta\big\}=\widehat\P_t\big\{\widetilde L_t \in \widetilde F_\eta\big\},
 $$
where $\widetilde F_\eta=\{\widetilde \mu\colon \mu\in F_\eta\}$, the set of orbits $\widetilde\mu=\{\mu\star\delta_x\colon x\in \R^3\}$ of members of $F_\eta$, is a closed set in $\widetilde\Mcal_1(\R^3) \hookrightarrow \widetilde{\mathcal X}$, and by Theorem \ref{thm1MV14}, $\widetilde F_\eta$ is also compact in $(\widetilde{\mathcal X},\mathbf D)$. Then by Theorem \ref{thm3MV14},
 $$
 \limsup_{t\to\infty}\frac 1t \log\widehat\P_t\big\{\widetilde L_t \in \widetilde F_\eta\big\} \leq - \inf_{\xi\in \widetilde F_\eta} \widehat J(\xi).
 $$
According to  \cite[Lemma 5.4]{MV14}, the variational formula in \eqref{rhohat} attains its maximum only in trivial sequences $\xi$ consisting of just one single orbit of a 
probability measure $\mu(\d x)=\psi^2(x)\,\d x$ with $\psi$ a rotationally symmetric, $L^2$-normalized function, which, by the uniqueness 
of the variational problem \eqref{rhodef} (recall \eqref{shiftunique}), must be one of the maximizers of the formula in \eqref{rhodef}, and $\rho=\widehat \rho$. Since $\widetilde F_\eta$ is in particular compact and does not contain such an element $\xi$, we have that $\inf_{\xi\in \widetilde F_\eta} \widehat J(\xi)>0$.  These two facts imply \eqref{unifclaim4} and hence Theorem \ref{tubeunifmetric}.
 \qed
 
 We end this section with the proof of Corollary \ref{expdecaysupnorm}.

{\it{Proof of Corollary \ref{expdecaysupnorm}.}}  The proof is straightforward and similar to the last line of arguments. Indeed, we note that for any $\delta>0$,
 $$
 \begin{aligned}
 \P\big\{\|\Lambda_t\|_\infty>b\big\}&\leq \P\bigg\{\sup_{|x_1-x_2|\leq \delta} \big|\Lambda_t(x_1) - \Lambda_t(x_2)\big| \geq b/2\bigg\}+ \P\big\{\sup_{x\in \delta \Z^3} \Lambda_t(x) \geq b/2\big\}\\ 
 & \leq  \P\bigg\{\sup_{|x_1-x_2|\leq \delta} \big|\Lambda_t(x_1) - \Lambda_t(x_2)\big| \geq b/2\bigg\}+ \P\bigg\{\sup_{\heap{x\in \delta \Z^3}{|x|\leq t^2}} \Lambda_t(x) \geq b/2\bigg\}\\
 &\qquad\qquad\qquad\qquad\qquad+\P\bigg\{\sup_{\heap{x\in \delta \Z^3}{|x|> t^2}} \Lambda_t(x) \geq b/2\bigg\} 
 \end{aligned}
 $$
By Theorem \ref{superexp}, the first term has a strictly negative exponential rate. The third term can again be neglected since this is of order $\exp\{-c t^3\}$. Also for the second term, the box of size $t^2$ can be covered by $O(t^6)$ sub-boxes of side length $\delta$. Therefore, 
 $$
\P\bigg\{\sup_{\heap{x\in \delta \Z^3}{|x|\leq t^2}} \Lambda_t(x) \geq b/2\bigg\}\leq  C t^6   \P\bigg\{\Lambda_t(0) >b/2\bigg\}.
 $$
 For any $\kappa >0$,
 $$
 \P\bigg\{\Lambda_t(0) >b/2\bigg\} \leq \e^{-\kappa bt/2} \, \E\bigg\{\exp\bigg\{\kappa\int_0^t \frac {\d s}{|W_s|}\bigg\}\bigg\}.
 $$
 We choose $t>u\gg 1$ and $\kappa>0$ small enough so that $\sqrt u \kappa\ll 1$ and
 $$
\alpha=\sup_{x\in \R^3} \E_x\bigg\{\kappa\int_0^u \frac{\d s} {|W_s|}\bigg\}= \E_0\bigg\{\kappa\int_0^u \frac{\d s} {|W_s|}\bigg\}=2 \kappa \sqrt u \E\bigg(\frac 1 {|W_1|}\bigg)\ll 1.
 $$ 
 Then by Khas'minskii's lemma \cite[p.~8]{S98},
 $$
 \sup_{x\in \R^3} \E_x\bigg\{\exp\bigg\{\kappa\int_0^u \frac {\d s} {|W_s|}\bigg\}\bigg\} \leq \frac 1 {1-\alpha},
 $$
 and by successive conditioning and the Markov property,
 $$
 \E\bigg\{\exp\bigg\{\kappa\int_0^t \frac {\d s} {|W_s|}\bigg\}\bigg\} \leq \bigg(\frac 1{1-\alpha}\bigg)^{t/u}.
 $$
 Since $\log(1+\alpha)\approx\alpha$ as $\alpha\to 0$, for any $b>0$ and $\kappa>0$ suitably chosen and $u$ large enough,
 $$
 \begin{aligned}
 \P\bigg\{\Lambda_t(0) >b/2\bigg\} &\leq \exp\bigg\{-\frac {\kappa b t}2 + \frac t u \log(1-\alpha)\bigg\}\\
 &\leq\exp\bigg[- t \kappa \bigg\{\frac b 2- \frac 1 {\sqrt u} c\bigg\}\bigg]
 \\
 &\leq\exp\big\{- t \kappa \widetilde C\big\}
  \end{aligned}
 $$
 for some $\widetilde C=\widetilde C(u,a, c)>0$. This proves the corollary.
\qed
 
\medskip 

{\bf{Acknowledgment.}} The second author would like to thank Erwin Bolthausen for suggesting this interesting problem and numerous useful discussions on the  model.

\end{document}